\newcommand{\N}{{\ensuremath{\mathbb{N}}}}
\newcommand{\C}{{\ensuremath{\mathbb{C}}}}
\def\BB{\mathbb{B}}
\def\KK{\mathbb{K}}
\def\MM{\mathrm{M}}
\def\la{\langle}
\def\ra{\rangle}
\def\C{\mathbb{C}}
\def\N{\mathbb{N}}
\newcommand\CB{\mathop{\mathrm{CB}}\nolimits}
\def\L{\mathcal{L}}
\def\M{\mathcal{L}_{\mathrm{ext}}}
\def\MM{\mathrm{M}}
\def\wt{\widetilde}
\newtheorem{proposition}{Proposition}[section]
\newtheorem{lemma}[proposition]{Lemma}
\newtheorem{theorem}[proposition]{Theorem}
\theoremstyle{definition}
\newtheorem{remark}[proposition]{Remark}
\newtheorem{definition}[proposition]{Definition}
\newtheorem{example}[proposition]{Example}
\numberwithin{equation}{section}
\begin{document}

\title[]{Elementary operators on Hilbert modules over prime $C^*$-algebras}

\author{Ljiljana Aramba\v{s}i\'c}

\address{Department of Mathematics, Faculty of Science, University of Zagreb, Bijeni\v{c}ka 30,
10000 Zagreb, Croatia}

\email{arambas@math.hr}

\author{Ilja Gogi\'c}

\address{Department of Mathematics, Faculty of Science, University of Zagreb, Bijeni\v{c}ka 30,
10000 Zagreb, Croatia}

\email{ilja@math.hr}

\date{\today}

\thanks{This work has been fully supported by the Croatian Science Foundation under the project IP-2016-06-1046.}

\keywords{$C^*$-algebra, prime, Hilbert $C^*$-module, elementary operator, completely bounded map}

\subjclass[2010]{Primary 46L08, 46L07, Secondary  47L25}

\begin{abstract}
Let $X$ be a right Hilbert module over a $C^*$-algebra $A$ equipped with the canonical operator space structure. We define an elementary operator on $X$ as a map $\phi : X \to X$ for which there exists a finite number of elements $u_i$ in the $C^*$-algebra $\mathbb{B}(X)$ of adjointable operators on $X$ and $v_i$ in the multiplier algebra $M(A)$ of $A$ such that $\phi(x)=\sum_i u_i xv_i$ for $x \in X$. If $X=A$ this notion agrees with the standard notion of an elementary operator on $A$. In this paper we extend Mathieu's theorem for elementary operators on prime $C^*$-algebras by showing that the completely bounded norm of each elementary operator on a non-zero Hilbert $A$-module $X$ agrees with the Haagerup norm of its corresponding tensor in  $\mathbb{B}(X)\otimes M(A)$ if and only if $A$ is a prime $C^*$-algebra.
\end{abstract}

\maketitle


\section{Introduction}

An operator on a  $C^*$-algebra $A$ is called an elementary operator if it can be expressed as a finite sum of two-sided multiplications $M_{a,b} : x \mapsto
axb$, where $a$ and $b$ are elements of the multiplier algebra $M(A)$. In other words, an elementary operator on $A$ is a map $\phi : A \to A$ of the form
$\phi : x\mapsto \sum_{i}a_ixb_i$ for some finite collections of  $a_i,b_i\in M(A).$ Obviously, such a representation of an elementary operator is not unique.

It is well-known that elementary operators on $C^*$-algebras are completely bounded mappings with the following estimate for their cb-norm:
$$
\left\|\sum_{i} M_{a_i,b_i}\right\|_{cb}\leq \left\|\sum_i a_i \otimes b_i\right\|_h,
$$
where $\|\cdot\|_h$ is the Haagerup tensor norm on the algebraic
tensor product $M(A) \otimes M(A)$, i.e.
\[
\|t\|_h = \inf \left\{\left\|\sum_{i} u_iu_i^*\right\|^{\frac{1}{2}}\left\|\sum_{i} v_i^*v_i\right\|^{\frac{1}{2}} \ : \ t=\sum_{i} u_i \otimes v_i\right\}.
\]
Hence, if $\CB(A)$ denotes the set of all completely bounded maps on $A$, the above inequality ensures that the mapping
\[
(M(A) \otimes M(A), \|\cdot\|_h) \to (\CB(A), \|\cdot\|_{cb}) \quad \mbox{given by} \quad \sum_{i} a_i \otimes b_i \mapsto \sum_{i} M_{a_i,b_i}
\]
is a well-defined contraction. Its continuous extension to the Haagerup tensor product $M(A) \otimes_h M(A)$ (which is the completion of $M(A)\otimes M(A)$ in $\|\cdot\|_h$) is known as the canonical contraction from $M(A) \otimes_h M(A)$ to $\CB(A)$ and is denoted by $\Theta_A$.

An interesting and a non-trivial question is to characterize the case when $\Theta_A$ is isometric or injective. The obvious necessary condition for the injectivity of $\Theta_A$ is that $A$ is a prime $C^*$-algebra. It turns out that the primeness of $A$ is also a sufficient condition for $\Theta_A$  to be isometric. First, Haagerup  showed in \cite{Haa} that $\Theta_A$ is isometric if $A$ is the $C^*$-algebra of all bounded linear operators on a Hilbert space. Then Chatterjee and Sinclair  showed in \cite{CS} that $\Theta_A$ is isometric if $A$ is a separably-acting von Neumann factor. Finally, Mathieu completed the answer to this problem \cite[Proposition 5.4.11]{AM}:

\begin{theorem}[Mathieu]\label{thetaiso}
Let $A$ be a $C^*$-algebra. The following conditions are equivalent:
\begin{itemize}
\item[(i)] $\Theta_A$ is isometric.
\item[(ii)] $\Theta_A$ is injective.
\item[(iii)] $A$ is a prime $C^*$-algebra.
\end{itemize}
\end{theorem}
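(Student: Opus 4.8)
The plan is to prove the implications $(i)\Rightarrow(ii)\Rightarrow(iii)\Rightarrow(i)$. The first is immediate, an isometry being injective. For $(ii)\Rightarrow(iii)$ I argue by contraposition: if $A$ is not prime there are non-zero closed ideals $I,J\triangleleft A$ with $IJ=\{0\}$, and choosing $0\neq a\in I$, $0\neq b\in J$ gives a non-zero element $a\otimes b$ of $M(A)\otimes_h M(A)$ (its Haagerup norm being $\|a\|\,\|b\|>0$) with $\Theta_A(a\otimes b)=M_{a,b}=0$, since $aAb\subseteq IAJ\subseteq IJ=\{0\}$; hence $\Theta_A$ is not injective.

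The content is $(iii)\Rightarrow(i)$. Since $\|\Theta_A\|_{cb}\leq 1$ always, it suffices to prove $\|\Theta_A(t)\|_{cb}\geq\|t\|_h$ for every $t=\sum_i a_i\otimes b_i$ in the algebraic tensor product $M(A)\otimes M(A)$. The first move is a reduction to the case of $\B(H)$, where Haagerup's theorem (quoted above) is available. For an irreducible representation $\rho$ of $A$ on $H_\rho$ let $\bar\rho\colon M(A)\to\B(H_\rho)$ be its canonical extension and $\Phi_\rho:=\Theta_{\B(H_\rho)}\bigl((\bar\rho\otimes\bar\rho)(t)\bigr)=\sum_i M_{\bar\rho(a_i),\bar\rho(b_i)}$ the associated elementary operator on $\B(H_\rho)$. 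Using that the norm on each amplification $M_m(A)$ is the supremum over the irreducible representations of $M_m(A)$, which are precisely the ampliations of those of $A$; that $\rho(A)$ is weakly dense in $\B(H_\rho)$ while elementary operators are weakly continuous on bounded sets (Kaplansky density); and that Haagerup's theorem holds for $\B(H_\rho)$, one obtains the identity
\[
\|\Theta_A(t)\|_{cb}=\sup_{\rho}\|\Phi_\rho\|_{cb}=\sup_{\rho}\bigl\|(\bar\rho\otimes\bar\rho)(t)\bigr\|_h ,
\]
valid for every $C^*$-algebra $A$, with $\rho$ running over the irreducible representations of $A$. Each $\bar\rho$ being a $*$-homomorphism, hence completely contractive, the right-hand side is $\leq\|t\|_h$, so $(iii)\Rightarrow(i)$ amounts to the assertion that for prime $A$ the irreducible representations detect the full Haagerup norm of $t$, i.e.\ that the supremum above equals $\|t\|_h$.

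When $A$ is primitive this is immediate: for $\rho$ faithful and irreducible, $\bar\rho$ is faithful as well --- $\ker\bar\rho$ is a closed ideal of $M(A)$ meeting $A$ only in $\ker\rho=\{0\}$, and $A$ is an essential ideal of $M(A)$ --- hence a complete isometry, so $\|(\bar\rho\otimes\bar\rho)(t)\|_h=\|t\|_h$ by the injectivity of the Haagerup tensor product. The difficult case, which I expect to be the main obstacle, is that of a prime but non-primitive $A$: such an algebra may admit no faithful, nor indeed any ``sufficiently large'', irreducible representation. The plan there is to exploit the characteristic feature of primeness --- any two non-zero ideals intersect non-trivially --- in order to transport the Hilbert-space data realizing $\|t\|_h$ (a representation $\sigma$, a contraction $P$ on $H_\sigma$, and unit vectors with $\bigl\|\sum_i\bar\sigma(a_i)P\bar\sigma(b_i)\xi\bigr\|>\|t\|_h-\varepsilon$, which exist once more by Haagerup's $\B(H)$-theorem) into a single irreducible representation, with $P$ replaced by the image of a norm-one element of $A$ via Kadison's transitivity theorem. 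Making this transport rigorous is precisely where one must invoke the deeper structure theory --- passing to the local multiplier algebra $M_{\mathrm{loc}}(A)$, which is again prime when $A$ is and whose representation theory is rich enough to carry the argument, together with a Glimm-type approximation --- and this step, rather than any of the preceding reductions, bears the real weight of the proof.
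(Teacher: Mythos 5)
Note first that the paper does not prove this theorem at all: it is quoted from \cite[Proposition~5.4.11]{AM} as a known result, so the only comparison available is with the literature. Within your proposal, the implications (i)$\Rightarrow$(ii) and (ii)$\Rightarrow$(iii) are correct and complete: an isometry is injective, and if $I,J$ are non-zero ideals with $IJ=\{0\}$ then for non-zero $a\in I$, $b\in J$ the tensor $a\otimes b$ is a non-zero element of $M(A)\otimes_h M(A)$ killed by $\Theta_A$. Your reduction formula $\|\Theta_A(t)\|_{cb}=\sup_\rho\|(\bar\rho\otimes\bar\rho)(t)\|_h$ is also a true statement (it is Mathieu's cb-norm formula), and the sketch via irreducible representations of $\MM_n(A)$, Kaplansky density and Haagerup's theorem for $\B(H)$ is essentially the right way to obtain it.

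The genuine gap is that (iii)$\Rightarrow$(i) --- the entire content of the theorem --- is not proved. You correctly isolate what remains, namely that for prime $A$ the irreducible representations detect $\|t\|_h$, and you settle the primitive case by observing that a faithful irreducible $\rho$ extends to a faithful, hence completely isometric, $\bar\rho$ on $M(A)$. But for a prime non-primitive $A$ (such algebras exist in the non-separable setting) every $\bar\rho$ has non-trivial kernel, and there is no a priori reason any single irreducible representation should nearly preserve the Haagerup norm of $t$; your proposed ``transport'' of the Hilbert-space data realizing $\|t\|_h$ into one irreducible representation is only a programme, and you yourself acknowledge that this step ``bears the real weight of the proof.'' The standard ways of closing this gap are genuinely non-trivial: for instance, one passes to the injective envelope (or regular monotone completion) of $A$, which for prime $A$ is an AW$^*$-factor, extends the elementary operator there without changing its cb-norm, and then invokes the Haagerup--Chatterjee--Sinclair--Smith isometry theorem for such factors; merely naming $M_{\mathrm{loc}}(A)$, Kadison transitivity and ``a Glimm-type approximation'' does not substitute for that argument. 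As it stands the proposal proves (i)$\Rightarrow$(ii)$\Rightarrow$(iii) and only reformulates, rather than establishes, the substantive implication.
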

If a $C^*$-algebra $A$ is unital, but not necessarily prime, one can construct a central Haagerup tensor product $A \otimes_{Z,h} A$ and consider the induced contraction $\Theta_{A}^Z : A \otimes_{Z,h} A \to \CB(A)$. The analogous questions about $\Theta_{A}^Z$ were treated in \cite{Som,AST1,AST2}.

It is also an interesting problem to consider which classes of maps (like derivations or automorphisms) on $C^*$-algebras can be approximated by two-sided multiplications or elementary operators in the operator or completely bounded norm. For results on this subject we refer to \cite{Gog1, GT} and the references within.

\smallskip

The purpose of this paper is to extend Theorem~\ref{thetaiso} to the class of operators on Hilbert $C^*$-modules which generalize elementary operators on $C^*$-algebras.

\section{Preliminaries}
Throughout the paper $A$ will be a $C^*$-algebra. By an ideal of $A$ we always mean a closed two-sided ideal. An ideal $I$ of $A$ is said to be \emph{essential} if for any $a \in A$, $aI=\{0\}$ (or $Ia=\{0\}$) implies $a=0$.

A $C^*$-algebra $A$ is said to be \emph{prime} if the product of any two non-zero ideals of $A$ is non-zero. Equivalently, $A$ is prime for $a,b\in A$ such that $aAb=\{0\}$ it follows that $a=0$ or $b=0$ (see e.g. \cite[Lemma~2.17]{INCA}).

A \emph{Hilbert $C^*$-module over} $A$ (or a \emph{Hilbert $A$-module}) is a right $A$-module $X$ equipped with an $A$-valued inner product $\la \cdot,\cdot \ra : X\times X \to A$ such that $X$ is a Banach space with respect to the norm defined by $\|x\|=\|\la x,x\ra\|^{\frac{1}{2}}.$ Recall that the inner product on $X$ has the properties
\begin{enumerate}
\item $\la x,\alpha y+\beta z\ra=\alpha \la x,y\ra +\beta \la x,z\ra$,
\item $\la x,ya\ra=\la x,y\ra a$,
\item $\la x,y\ra=\la y,x\ra^*$,
\item $\la x,x\ra \geq 0$; $\la x,x\ra =0 \Leftrightarrow x=0$,
\end{enumerate}
that are satisfied for all $x,y,z\in X$, $a\in A$ and $\alpha,\beta\in \Bbb C.$ In a similar way a left Hilbert $A$-module is defined; the only differences are that we have a left module action and an inner product is linear and $A$-linear in the first variable instead of in the second variable.

For a  Hilbert $A$-module $X$ we denote by $\la X, X \ra$ the closed linear span of the set $\{\la x,y\ra : x,y\in X\}$. Clearly,  $\la X, X \ra$ is an ideal of $A$. If $\la X, X \ra=A$, $X$ is said to be \emph{full}. We will say that $X$ is \emph{essentially full} if $\la X, X \ra$ is an essential ideal of $A$.

Every $C^*$-algebra can be  regarded as a Hilbert $C^*$-module over itself with respect to the inner product $\la a,b\ra=a^*b.$ Also, if $I$ is an ideal in a $C^*$-algebra $A$ then $I$ can be regarded as a Hilbert $A$-module with the same inner product. Further, if $X_1,\ldots,X_n$ are Hilbert $A$-modules, then $X_1\oplus\ldots\oplus X_n$ is a Hilbert $A$-module with respect to the module action given as
$$
(x_1\oplus\ldots\oplus x_n)a=x_1a\oplus\ldots\oplus x_na
$$
and the inner product
$$\la x_1\oplus\ldots\oplus x_n,y_1\oplus\ldots\oplus y_n\ra=\sum_{i=1}^n\la x_i,y_i\ra.
$$

If $X$ and $Y$ are Hilbert $A$-modules we denote by $\Bbb B(X,Y)$ the Banach space of all \emph{adjointable operators} from $X$ to $Y,$ that is, those $u:X\to Y$ for which there is
$u^*:Y\to X$ with the property
$$\la ux,y\ra=\la x,u^*y\ra \qquad  \forall x\in X, \, y\in Y.$$
It is well-known that all adjointable operators are bounded and  $A$-linear (i.e. $u(xa)=(ux)a$ for all $x \in X$ and $a \in A$). By $\Bbb K(X,Y)$ we denote the Banach subspace of $\Bbb B(X,Y)$ generated by the maps
$$\theta_{y,x}:X\to Y,\qquad z \mapsto y\la x,z\ra, \qquad$$
where $x\in X$ and $y\in Y$ are arbitrary.
If $X=Y$ we write $\Bbb B(X)$ and $\Bbb K(X)$ (or $\BB_A(X)$ and $\KK_A(X)$ when we want to emphasize the underlying $C^*$-algebra $A$), and these are $C^*$-algebras. Moreover, $\Bbb B(X)$ is the multiplier $C^*$-algebra of $\Bbb K(X)$ (\cite[Corollary~2.54]{RW}). If we regard a $C^*$-algebra $A$ as a Hilbert module over itself, then $\Bbb B(A)$ is actually the multiplier $C^*$-algebra $M(A)$ of $A$.


If $X$ is a Hilbert $A$-module then, regarding $A$ as a Hilbert $A$-module, $A\oplus X$ becomes a Hilbert $A$-module in above-mentioned way, so the $C^*$-algebras $\Bbb K(A\oplus X)$ and $\Bbb B(A\oplus X)$ are well defined.
The first of them, i.e. $\Bbb K(A\oplus X)$, is known as the \emph{linking algebra} of $X$; we denote it by $\L(X)$ (\cite[p.~350]{BGR}).
Then we can write
$$\L(X)=\begin{bmatrix}
\Bbb K(A) & \Bbb K(X,A) \\
\Bbb K(A,X) & \Bbb K(X) \\
\end{bmatrix}
=\left\{\begin{bmatrix}
T_a & l_y \\
r_x & u \\
\end{bmatrix}: \, a\in A,\,  x,y\in X, \, u\in \Bbb K(X)
\right\},$$
where $T_a(b)=ab$ and $r_x(b)=xb$ for all $b\in A$, while $l_y(z)=\la y,z\ra$ for all $z\in X$.  Thereby, $a\mapsto T_a$ is an isomorphism of $C^*$-algebras $A$ and $\Bbb K(A),$ $y\mapsto l_y$ is an isometric conjugate linear isomorphism between Banach spaces $X$ and $\Bbb K(X,A),$
and $x\mapsto r_x$ is an isometric linear isomorphism between Banach spaces $X$ and $\Bbb K(A,X).$

For more details about Hilbert $C^*$-modules we refer the reader to \cite{La, MT, RW, WO}.

If $X$ is an operator space we write $\CB(X)$ for the Banach algebra of all completely bounded maps on $X$. For details about operator spaces, their tensor products and completely bounded maps we refer to \cite{BLM, ER, SS}.


\section{Results}

Let $X$ be a Hilbert $A$-module. Besides the linking algebra $\L(X),$ we need another subalgebra of $\BB (A\oplus X),$ larger than $\L(X).$

We define an \emph{extended linking algebra} of $X$ as
$$\M(X)=\begin{bmatrix}
\BB(A) & \Bbb K(X,A) \\
\Bbb K(A,X) & \BB(X) \\
\end{bmatrix}=
\left\{\begin{bmatrix}
T_v & l_y \\
r_x & u \\
\end{bmatrix}:v\in M(A), \, x,y\in X, \, u\in \Bbb B(X)
\right\},$$
where, similarly as before, for $v \in M(A)$, $T_v : A \to A$ is defined by $T_v(a)=va$.

Let us first show that $\M(X)$ is a $C^*$-algebra. For that we shall need the following remark.

\begin{remark}\label{rem:multmod}
Let $X$ be a Hilbert $A$-module. If $B$ is any $C^*$-algebra that contains $A$ as an ideal, then $X$ can be also regarded as a Hilbert $B$-module with respect to the same inner product (which takes values in $A\subseteq B$), while the right action of $B$ on $X$ is defined as follows. For $x \in X$, $a \in A$ and $b \in B$, set
$$(xa)b:=x(ab)$$
(see e.g. \cite[8.1.4 (4)]{BLM}). Obviously, $\BB_B(X)=\BB_A(X)$ and $\KK_A(X)=\KK_B(X)$, so all $u \in \BB_A(X)$ are also $B$-linear. In particular, by taking $B=M(A)$, any Hilbert $A$-module $X$ can be regarded as a Hilbert $M(A)$-module.
\end{remark}

\vspace{1mm}

\begin{lemma}\label{lem:ela} Let $X$ be a Hilbert $A$-module.
$\M(X)$ is a $C^*$-subalgebra of $\BB(A \oplus X)$ which contains $\L(X)$ as an essential ideal.
\end{lemma}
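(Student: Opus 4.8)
The plan is to realize $\M(X)$ concretely as a set of adjointable operators on $A \oplus X$ and check the $C^*$-algebra axioms block by block. First I would fix the identification: for $v \in M(A)$, $x,y \in X$ and $u \in \BB(X)$, the matrix $\begin{bmatrix} T_v & l_y \\ r_x & u \end{bmatrix}$ acts on a column $(a,z)^{\mathrm{t}} \in A \oplus X$ by $(va + \la y,z\ra,\ xa + uz)^{\mathrm{t}}$. Using Remark~\ref{rem:multmod} (so that $X$ is a Hilbert $M(A)$-module and $u$ is automatically $M(A)$-linear), one checks directly that this map is adjointable with adjoint $\begin{bmatrix} T_{v^*} & l_x \\ r_y & u^* \end{bmatrix}$, the key computation being $\la l_y z, a\ra_A = \la \la y,z\ra, a\ra_A = \la z,y\ra a = \la z, ya\ra_A = \la z, r_y a\ra_A$ and symmetrically for the other off-diagonal block. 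Hence $\M(X) \subseteq \BB(A\oplus X)$ as a set.

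Next I would verify that $\M(X)$ is a $*$-subalgebra. Closure under adjoints is the computation just described. Closure under addition and scalar multiplication is immediate from the block form since $M(A)$, $X$, and $\BB(X)$ are linear spaces and $v \mapsto T_v$ is linear. For closure under multiplication one multiplies two such block matrices and checks each of the four resulting entries lands in the prescribed corner: the $(1,1)$-entry is $T_vT_{v'} + l_y r_{x'}$, and $l_y r_{x'}$ sends $a \mapsto \la y, x'a\ra = \la y,x'\ra a = T_{\la y,x'\ra}a$ with $\la y,x'\ra \in A \subseteq M(A)$; the $(1,2)$-entry is $T_v l_{y'} + l_y u' = l_{v\cdot y'} + l_{u'^* y}$ (here $v \cdot y'$ makes sense by the $M(A)$-module structure, and $v \cdot y' \in X$); the $(2,1)$-entry is $r_x T_{v'} + u r_{x'} = r_{xv'} + r_{ux'}$ with $xv' \in X$ via the $M(A)$-action; and the $(2,2)$-entry is $r_x l_{y'} + u u'$ where $r_x l_{y'}$ sends $z \mapsto x\la y',z\ra = \theta_{x,y'}z \in \KK(X) \subseteq \BB(X)$. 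Norm-closedness then follows because $M(A)$, $\KK(X,A)$, $\KK(A,X)$, and $\BB(X)$ are all closed, and convergence of a sequence of block matrices in $\BB(A\oplus X)$ forces entrywise convergence (compress by the two corner projections $p = \begin{bmatrix}1&0\\0&0\end{bmatrix}$, $q=1-p$, which lie in $M(\KK(A\oplus X)) = \BB(A\oplus X)$). Thus $\M(X)$ is a $C^*$-subalgebra of $\BB(A\oplus X)$.

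It remains to show $\L(X)$ is an essential ideal of $\M(X)$. That $\L(X) \subseteq \M(X)$ is clear from comparing the two block descriptions ($A \subseteq M(A)$, $\KK(X) \subseteq \BB(X)$, same off-diagonal corners). For the ideal property, multiply a general element of $\M(X)$ by a general element of $\L(X)$ on either side and use the entrywise computations above together with the facts $M(A)\cdot A \subseteq A$, $A \cdot A \subseteq A$, $\BB(X)\KK(X) \subseteq \KK(X)$, $\KK(X)\BB(X)\subseteq \KK(X)$, $\la X,X\ra \subseteq A$, and $M(A)\cdot X \subseteq X$ — each product stays in the corresponding corner of $\L(X)$. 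For essentiality I would use that $\KK(A\oplus X)$ is an essential ideal of $\BB(A\oplus X) = M(\KK(A\oplus X))$: if $t \in \M(X)$ satisfies $t\,\L(X) = \{0\}$, then in particular $t$ annihilates $\KK(A) \oplus 0$-type elements and the generators $\theta_{y',x'}$ of $\KK(X)$ sitting in the lower corner, as well as the off-diagonal $r_x, l_y$; writing $t = \begin{bmatrix} T_v & l_y \\ r_x & u\end{bmatrix}$ and testing against $\begin{bmatrix} T_a & 0 \\ 0 & 0\end{bmatrix}$ gives $va = 0$ for all $a \in A$ and $xa=0$ for all $a\in A$, hence $v=0$ (since $A$ is essential in $M(A)$) and $x=0$; testing against $\begin{bmatrix}0 & 0 \\ 0 & \theta_{z,z'}\end{bmatrix}$ forces $u\theta_{z,z'} = 0$ and $l_y$ composed appropriately to vanish, giving $u\KK(X) = \{0\}$ so $u = 0$ (as $\KK(X)$ is essential in $\BB(X)$) and $\la y,z\ra z' = 0$ for all $z,z'$, so $y = 0$. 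Hence $t = 0$, and symmetrically for left annihilation.

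The only mildly delicate point is the bookkeeping of the $M(A)$-module structure on $X$ when forming products like $T_v l_{y'}$ and $r_x T_{v'}$ — one must be careful that $v \cdot y'$ and $x \cdot v'$ genuinely lie in $X$ and that the adjoint identities survive. This is exactly what Remark~\ref{rem:multmod} is for, and once it is invoked the rest is routine $2\times 2$ matrix computation. I expect no real obstacle; the essentiality argument via $\KK(A\oplus X) \trianglelefteq \BB(A\oplus X)$ and the essentiality of $A$ in $M(A)$ and of $\KK(X)$ in $\BB(X)$ is the part that requires the most care to state cleanly.
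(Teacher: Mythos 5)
Your proposal is correct and follows essentially the same route as the paper for the main part: identify the block action on $A\oplus X$, verify closure under adjoints and multiplication corner by corner (with Remark~\ref{rem:multmod} supplying the $M(A)$-module structure needed for the off-diagonal products), and deduce norm-closedness by compressing with the corner projections, which is exactly the argument the paper delegates to \cite[Lemma~3.20]{RW}. The one place you diverge is essentiality: the paper disposes of it in one line by observing that $\L(X)=\KK(A\oplus X)$ is an essential ideal of its multiplier algebra $\BB(A\oplus X)$, hence automatically an essential ideal of the intermediate $C^*$-subalgebra $\M(X)$; you instead test an annihilating element against specific generators of $\L(X)$ and reduce to the essentiality of $A$ in $M(A)$ and of $\KK(X)$ in $\BB(X)$. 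Your computation works (modulo a small notational slip in the $(1,2)$-entry, where the product with $\theta_{z,z'}$ yields $l_{z'\langle z,y\rangle}$ rather than ``$\langle y,z\rangle z'$'', which does not typecheck in a right module --- the conclusion $y=0$ is unaffected), but the abstract argument is both shorter and less error-prone, so it is worth knowing.
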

\begin{proof}
Clearly $\M(X)$ is a linear subspace of $\BB(A \oplus X)$.
If
$$S=\begin{bmatrix}
T_v & l_y \\
r_x & u \\
\end{bmatrix}\in \M(X),$$
one can easily verify that the adjoint of $S$ in $\BB(A \oplus X)$ is given by
$$\begin{bmatrix}
T_{v^*} & l_x \\
r_y & u^* \\
\end{bmatrix},$$
so $S^* \in \M(X)$.  Further, for all $v_1,v_2 \in M(A)$, $x_1,x_2,y_1,y_2 \in X$ and $u_1,u_2 \in \BB(X)$ we have
$$\begin{bmatrix}
T_{v_1} & l_{y_1} \\
r_{x_1} & u_1 \\
\end{bmatrix}
\begin{bmatrix}
T_{v_2} & l_{y_2} \\
r_{x_2} & u_2 \\
\end{bmatrix} =
\begin{bmatrix}
T_{v_1v_2+\la y_1,x_2\ra} & l_{y_2v_1^*+u_2^*y_1} \\
r_{x_1v_2+u_1x_2} & \theta_{x_1,y_2}+u_1u_2
\end{bmatrix}\in \M(X),$$
since $X$ can be regarded as a Hilbert $M(A)$-module (Remark \ref{rem:multmod}) and hence $y_2v_1^*,x_1v_2\in X$. This shows that $\M(X)$ is a self-adjoint subalgebra of $\BB(A \oplus X)$. Using the similar arguments as in the proof of \cite[Lemma~3.20]{RW} we also conclude that $\M(X)$ is norm closed and hence a $C^*$-subalgebra of $\BB(A \oplus X)$.

Finally, using the fact that $\L(X)$ is an essential ideal of (its multiplier $C^*$-algebra) $\BB(A\oplus X)$, we conclude that $\L(X)$ is an essential ideal of $\M(X)$.
\end{proof}

In the introduction we gave the notion of essentially full Hilbert modules: a Hilbert $A$-module $X$ is essentially full if $\la X, X \ra$ is an essential ideal of $A$.
As we show in the next lemma, essential fullness guarantees some kind of nondegeneracy of $X$ regarded as a Hilbert $C^*$-module over any $C^*$-algebra which contains $\la X,X\ra$ as an essential ideal.

\begin{lemma}\label{lem:essfull}
For a non-zero Hilbert $A$-module $X$ the following conditions are equivalent:
\begin{itemize}
\item[(i)] $X$ is essentially full.
\item[(ii)] For each non-zero element $a \in A$ there exists $x \in X$ such that
$xa \neq 0$.
\end{itemize}
\end{lemma}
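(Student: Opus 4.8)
The plan is to prove the equivalence (i) $\Leftrightarrow$ (ii) by contraposition in both directions, exploiting the standard fact that for a Hilbert $A$-module one has the identity $xa = 0$ for all $a \in I$ if and only if $\langle x, x \rangle I = \{0\}$, together with the characterization of essential ideals.

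For the direction (i) $\Rightarrow$ (ii), I would argue contrapositively. Suppose there is a non-zero $a \in A$ with $xa = 0$ for every $x \in X$. Then for all $x, y \in X$ we have $\langle x, y \rangle a = \langle x, ya \rangle = \langle x, 0 \rangle = 0$, using property (2) of the inner product. Hence $\langle X, X \rangle a = \{0\}$ (the annihilation extends from the generating elements to the closed linear span by continuity), so $\langle X, X \rangle$ is not an essential ideal of $A$, i.e. $X$ is not essentially full. For the converse, (ii) $\Rightarrow$ (i), again by contraposition: if $\langle X, X \rangle$ is not essential, pick a non-zero $a \in A$ with $a \langle X, X \rangle = \{0\}$ (or $\langle X, X \rangle a = \{0\}$; the two are equivalent for the ideal annihilator question since one can pass to $a^*$). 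Then for every $x \in X$, $\langle xa, xa \rangle = a^* \langle x, x \rangle a$. Now $\langle x, x \rangle \in \langle X, X \rangle$, so $\langle x, x \rangle a = 0$, whence $\langle xa, xa \rangle = 0$ and therefore $xa = 0$ by property (4), contradicting (ii).

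The main technical point to get right is the passage between "$\langle X, X \rangle a = \{0\}$" and "$xa = 0$ for all $x$", which hinges on positivity: from $\langle x,x\rangle a = 0$ one gets $a^*\langle x,x\rangle a = 0$, i.e. $\langle xa, xa\rangle = 0$, and the definiteness axiom finishes it. One should also be slightly careful that essentiality of an ideal $I$ is stated in the excerpt with the left-annihilator condition $aI = \{0\} \Rightarrow a = 0$; since $I = \langle X, X\rangle$ is self-adjoint, $aI = \{0\}$ iff $Ia^* = \{0\}$, so there is no real asymmetry, but I would spell this out to keep the logic clean. No deeper input (like the linking algebra or $\M(X)$) seems necessary here; this is essentially a direct computation with the inner product axioms and the definition of essential ideal, and I do not anticipate a genuine obstacle — just the need to present the continuity/density and positivity steps carefully.
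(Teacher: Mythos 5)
Your proof is correct and follows essentially the same route as the paper's: both directions reduce to the identity relating $\la X,X\ra a=\{0\}$ to $xa=0$ for all $x$, via the positivity computation $\la xa,xa\ra = a^*\la x,x\ra a$ (the paper writes the equivalent $\la xa,x\ra a=\la xa,xa\ra$ and argues the second direction directly rather than by contraposition, but the content is identical).
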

\begin{proof}
(i) $\Longrightarrow$ (ii). Assume $X$ is essentially full and  let $a\in A$ be such that
$xa=0$ for all $x \in X$. Then
$$\la y,x \ra a= \la y,xa \ra =0 \qquad \forall x,y \in X,$$
which implies $\la X, X\ra a=\{0\}$. Since $X$ is essentially full, we conclude that $a=0$.

\smallskip

(ii) $\Longrightarrow$ (i). Let $a \in A$, $a \neq 0$. By assumption, there exists $x \in X$ such that $xa \neq 0$. Then 
$$\la xa, x \ra a = \la xa, xa \ra \neq 0,$$
so  $\la X , X \ra a \neq \{0\}$. Therefore, $X$ is essentially full. 
\end{proof}

In the following proposition we give several equivalent descriptions of Hilbert $C^*$-modules over prime $C^*$-algebras.

\begin{proposition}\label{prop:prime} Let $X$ be a non-zero Hilbert $A$-module. The following conditions are equivalent:
\begin{itemize}
\item[(i)] $A$ is prime.
\item[(ii)] $X$ is essentially full and $\KK(X)$ is prime.
\item[(iii)] The linking algebra $\L(X)$ is prime.
\item[(iv)] The extended linking algebra $\M(X)$ is prime.
\item[(v)] If $a\in A$ and $u\in \KK(X)$ are such that $uxa=0$ for all $x\in X$ then $a=0$ or $u=0$.
\item[(vi)]  $X$ is essentially full and if $x_1,x_2\in X$ are such that $x_1\la x,x_2\ra=0$ for all $x\in X$ then $x_1=0$ or $x_2=0$.
\end{itemize}
\end{proposition}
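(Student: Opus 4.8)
The plan is to establish a cycle of implications linking the six conditions, routing everything through the primeness of $A$ and the characterization of essential fullness from Lemma~\ref{lem:essfull}. The natural order is $(i)\Rightarrow(iv)\Rightarrow(iii)\Rightarrow(i)$ for the three ``linking algebra'' conditions, then to peel off $(ii)$, $(v)$ and $(vi)$ as refinements. First I would prove $(i)\Rightarrow(iv)$: assuming $A$ is prime, take two elements $S,T$ of $\M(X)$ with $S\,\M(X)\,T=\{0\}$ and show $S=0$ or $T=0$. Writing $S,T$ in their $2\times2$ matrix forms with corner entries $T_v, u$ and $T_w, u'$ respectively, I would test the product against the elementary ``matrix units'' available in $\M(X)$ — namely matrices supported in a single corner, built from $T_a$ ($a\in A$), $r_x$, $l_y$, and $\theta_{x,y}\in\KK(X)$. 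Sandwiching $S$ and $T$ between such elements and reading off the resulting corner entries will force, via primeness of $A$, a family of relations of the form $\la\cdot,\cdot\ra = 0$ and $v A w = \{0\}$ etc.; pushing these through (and using that $A$ sits as an essential ideal in $M(A)$, so $vAw=\{0\}$ forces $v=0$ or $w=0$) should yield that $S$ or $T$ is zero. The implication $(iv)\Rightarrow(iii)$ is immediate since $\L(X)$ is an ideal of $\M(X)$ (Lemma~\ref{lem:ela}) and a nonzero ideal of a prime algebra is prime.

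For $(iii)\Rightarrow(i)$ I would argue contrapositively: if $A$ is not prime, pick nonzero ideals $I,J$ of $A$ with $IJ=\{0\}$, or equivalently nonzero $a,b$ with $aAb=\{0\}$; then the diagonal elements $\di(T_a,0)$ and $\di(T_b,0)$ are nonzero in $\L(X)$ and one checks $\di(T_a,0)\,\L(X)\,\di(T_b,0)=\{0\}$, so $\L(X)$ is not prime. (Here nonzero-ness of $T_a$ uses that $a\mapsto T_a$ is an isomorphism.) This closes the loop among $(i)$, $(iii)$, $(iv)$.

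Next, $(i)\Leftrightarrow(ii)$. For $(i)\Rightarrow(ii)$: essential fullness follows because $\la X,X\ra$ is a nonzero ideal of the prime algebra $A$, hence essential (in a prime $C^*$-algebra every nonzero ideal is essential, since the product of an ideal with a nonzero ideal is nonzero). Primeness of $\KK(X)$ then follows from primeness of $\L(X)$ because $\KK(X)$ is a (full) corner $p\L(X)p$ of $\L(X)$ with $p$ the projection onto the $X$-summand, and a hereditary subalgebra — in particular a corner — of a prime $C^*$-algebra is prime. Conversely, for $(ii)\Rightarrow(i)$, suppose $a,b\in A$ with $aAb=\{0\}$; I would like to conclude $a=0$ or $b=0$. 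Using essential fullness and Lemma~\ref{lem:essfull}, if $b\neq0$ there is $x_0$ with $x_0b\neq0$; then for all $x,y\in X$ the operator $\theta_{xa,x}\,\theta_{y,y'b}\in\KK(X)$-type products vanish because the middle inner product $\la x, y\ra$ gets squeezed by $a(\cdot)b$ — more precisely one shows $\theta_{y_1a,\,x_1}\,\KK(X)\,\theta_{y_2,\,x_2 b}=\{0\}$ by expanding a typical $\theta_{z,w}$ in between and using $\la x_1,z\ra a' \la w, y_2\ra$ with $aA b = \{0\}$ after absorbing the $A$-coefficients — and primeness of $\KK(X)$ forces one factor to be zero; choosing the $x_i,y_i$ suitably (using Lemma~\ref{lem:essfull} again to realize nonzero elements of the form $y a$, $x b$) yields $a=0$ or $b=0$.

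Finally, $(v)$ and $(vi)$. For $(i)\Rightarrow(v)$: if $a\in A$, $u\in\KK(X)$ and $uxa=0$ for all $x$, then for all $x,y$ we get $\la y, uxa\ra = \la u^*y, x\ra a = 0$, so $\la X,X\ra a=\{0\}$ after letting $u^*y$ range (when $u\neq0$, the elements $u^*y$ span a nonzero submodule whose inner products with $X$ form a nonzero ideal); essential fullness (from $(ii)$) then gives $a=0$. A cleaner route is to realize $(v)$ as a statement about the corner $\KK(A,X)\subseteq\L(X)$: the hypothesis says $u\cdot r_x\cdot T_a$-products vanish, i.e.\ $\di(0,u)\,\L(X)\,\di(T_a,0)=\{0\}$ via the off-diagonal $r_x$, so primeness of $\L(X)$ gives $u=0$ or $a=0$. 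For the converse $(v)\Rightarrow(i)$ one runs the same computation backwards. Condition $(vi)$ is the specialization of $(v)$ to rank-one operators $u=\theta_{x_1,\cdot}$: indeed $x_1\la x', x_2\ra a$-type expressions are exactly $\theta_{x_1,x_2}(x')a$ up to adjusting $x_2$, and essential fullness lets one drop the $a$; I would prove $(i)\Rightarrow(vi)$ directly (essential fullness as above, plus: if $x_1\la x, x_2\ra=0$ for all $x$ and $x_2\neq0$, pick via Lemma~\ref{lem:essfull} some $z$ with $\la z,z\ra\neq0$ hitting $x_2$, then $\la X,X\ra$-primeness forces $x_1=0$) and $(vi)\Rightarrow(i)$ by reducing an arbitrary $aAb=\{0\}$ to the rank-one relation after using essential fullness to write things in the form $x_1\la x, x_2\ra$.

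The main obstacle I anticipate is the bookkeeping in $(i)\Rightarrow(iv)$: $\M(X)$ has the unit-free corner $T_v$ living in $M(A)$ rather than $A$, so after sandwiching one must carefully pass from relations like $v\,A\,w=\{0\}$ (with $v,w\in M(A)$) to $v=0$ or $w=0$, which requires invoking that $A$ is an \emph{essential} ideal of $M(A)$ together with primeness of $A$ — a short but easy-to-botch step. Everything else is routine corner/ideal arguments once the right matrix units are chosen.
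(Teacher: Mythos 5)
Your plan is workable and reaches all six conditions, but it routes the equivalences differently from the paper, and the one step you flag as ``bookkeeping'' is in fact the place where your approach is at real risk. The paper never proves (i) $\Rightarrow$ (iv) by sandwiching general elements of $\M(X)$ between matrix units. Instead it proves that $A$ prime forces $\KK(Y)$ prime for \emph{any} Hilbert $A$-module $Y$ (a short computation with $\theta_{x,y}$'s), applies this to $Y=A\oplus X$ to get (iii) for free since $\L(X)=\KK(A\oplus X)$, and then gets (iv) from Lemma~\ref{lem:ela} via the standard fact that a $C^*$-algebra containing a prime essential ideal is prime. This sidesteps entirely the issue you would face: after testing $S\,Z\,T=0$ against corner elements $Z$ you obtain a family of independent disjunctions (one per pair of corners, e.g.\ ``$v=0$ or $w=0$'', ``$u=0$ or $u'=0$'', ``$y=0$ or $x'=0$'', \dots), and assembling these into the single disjunction ``$S=0$ or $T=0$'' is not routine -- the clean way to do it is to use essentiality of $\L(X)$ to replace $S,T$ by nonzero elements $SL,\,L'T\in\L(X)$ and invoke primeness of $\L(X)$, which is exactly the paper's reduction. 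So either carry out that reduction explicitly or adopt the paper's route; as written, (i) $\Rightarrow$ (iv) is a genuine gap rather than a finished argument.

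The rest of your graph is fine and in places arguably cleaner than the paper's: getting $\KK(X)$ prime as the corner $p\L(X)p$ is a nice alternative to the paper's direct $\theta$-computation (the paper instead derives (iii) from (ii) by the $A\oplus X$ trick); your identification of condition (v) with the vanishing of $\iota_{\BB(X)}(u)\,\L(X)\,\iota_{M(A)}(T_a)$ matches the paper's (iv) $\Rightarrow$ (v) computation, and running it backwards does give (v) $\Rightarrow$ (iii) provided you first note (as the paper does in its (v) $\Rightarrow$ (vi) step) that (v) already forces $X$ to be essentially full, so that nonzero elements of the form $ya_1$, $xa_2$ exist. Two small repairs: in (ii) $\Rightarrow$ (i) your middle term comes out as $a\,\langle x_1,z\rangle\langle w,y_2\rangle\, b^*$, so you must start from $aAb^*=\{0\}$ (harmless, since ideals are self-adjoint, but it should be said); and in (i) $\Rightarrow$ (vi) the argument is simply that $x_1\langle x,x_2\rangle=0$ for all $x$ yields $\langle x_1,x_1\rangle\, a\,\langle x_2,x_2\rangle \in \langle X,x_1\rangle\langle Xa^*,x_2\rangle=\{0\}$, whence $x_1=0$ or $x_2=0$ by primeness of $A$ -- no appeal to Lemma~\ref{lem:essfull} is needed there, and your phrase about ``picking $z$ hitting $x_2$'' should be replaced by this computation.
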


\begin{proof}
(i) $\Longrightarrow$ (ii), (iii). Assume that $A$ is prime. Then any non-zero (two-sided) ideal of $A$ is essential (see e.g. \cite[Lemma~1.1.2]{AM}), so in particular $X$ is essentially full. Observe that, in order to get that $\mathcal{L}(X)$ is prime, it is enough to show that if $A$ is prime then $\Bbb K(X)$ is also prime. Namely, the linking algebra $\mathcal{L}(X)$ is defined as $\Bbb K(A\oplus X).$ Since $A\oplus X$ is a Hilbert $C^*$-module over the same $C^*$-algebra $A$, it will then follow that $\mathcal{L}(X)$ is prime whenever $A$ is prime.

Assume there exist non-zero $u_1,u_2\in \KK(X)$ such that $u_1\KK(X)u_2=\{0\}$. Then there are $x_1,x_2\in X$ such that $u_1x_1 \neq 0$ and $u_2x_2\neq 0.$ By assumption,
$$u_1\,\theta_{x_1a,u_2x_2}\,u_2=0 \qquad \forall a \in A.$$
Then
\begin{eqnarray*}
\la u_1x_1,u_1x_1\ra a\la u_2x_2,u_2x_2\ra&=&\la u_1x_1,u_1(x_1 a\la u_2x_2,u_2x_2\ra)\ra\\
&=&\la u_1x_1,(u_1\, \theta_{x_1a,u_2x_2}\,u_2)(x_2) \ra\\
&=&0
\end{eqnarray*}
for all $a\in A,$ which is a contradiction with the assumption that $A$ is prime, since both $\la u_1x_1,u_1x_1\ra$ and $\la u_2x_2,u_2x_2\ra $ are non-zero. Therefore, $u_1\KK(X)u_2=\{0\}$ can happen only when  $u_1=0$ or $u_2=0$, which shows that $\Bbb K(X)$ is prime.

\smallskip

(ii)  $\Longrightarrow$ (i). Assume that $X$ is essentially full and that $A$ is not prime. Then there exist non-zero elements $a_1,a_2\in A$ such that $a_1Aa_2=\{0\}$. Then by Lemma \ref{lem:essfull} there are $x_1, x_2\in X$ such that $x_1a_1\ne 0$ and $x_2 a_2 \ne 0.$ By assumption,
$$a_1\la x_1a_1,ux_2\ra a_2=0 \qquad \forall u \in \KK(X).$$
Then for all $x \in X$ and $u \in \KK(X)$ we have
\begin{eqnarray*}
(\theta_{x_1a_1,x_1a_1} \, u \, \theta_{x_2a_2,x_2a_2})(x)&=&x_1a_1\la x_1a_1,u\,\theta_{x_2a_2,x_2a_2}(x)\ra\\
&=& x_1a_1\la x_1a_1,u(x_2a_2\la x_2a_2,x\ra)\ra\\
&=&x_1a_1\la x_1a_1,u x_2\ra a_2\la x_2a_2,x\ra\\
&=& x_1(a_1\la x_1a_1,ux_2\ra a_2)\la a_2x_2,x\ra\\
&=& 0.
\end{eqnarray*}
Thus,
$$\theta_{x_1a_1,x_1a_1}\,\KK(X)\,\theta_{x_2a_2,x_2a_2}=\{0\}.$$
Since both $\theta_{x_1a_1,x_1a_1}$ and $\theta_{x_2a_2,x_2a_2}$ are non-zero, we conclude that $\KK(X)$ is not prime.

\smallskip

(iii) $\Longrightarrow$ (iv). This follows directly from Lemma \ref{lem:ela} and the fact that any $C^*$-algebra that contains a prime essential ideal must be prime itself.

\smallskip

(iv) $\Longrightarrow$ (v). Assume that $\M(X)$ is prime. Then for non-zero elements $a_0\in A$ and $u_0 \in \KK(X)$ there are elements $v \in M(A)$, $x,y \in X$ and $u \in \BB(X)$ such that
$$0 \neq
\begin{bmatrix}
0 & 0 \\
0 & u_0 \\
\end{bmatrix}\begin{bmatrix}
T_v & l_y \\
r_x & u \\
\end{bmatrix}
\begin{bmatrix}
T_{a_0} & 0\\
0 & 0 \\
\end{bmatrix}
=\begin{bmatrix}
0 & 0 \\
r_{u_0xa_0} & 0 \\
\end{bmatrix}.$$
Thus, $u_0xa_0\neq 0$ for some $x\in X$.

\smallskip

(v)  $\Longrightarrow$ (vi). Suppose first that there exists $a\in A$, $a\neq 0$, such that $xa=0$ for all $x\in X.$ Then $uxa=0$ for all $x\in X$ and $u\in \KK (X).$ By assumption, it follows that $u=0$ for all $u\in \KK(X),$ which is not since $X\ne \{0\}.$ Therefore, $X$ is essentially full.

Let $x_1,x_2 \in X$ be such that $x_1 \la x, x_2 \ra=0$ for all $x\in X$. Then
$$\theta_{x_1,x_1}(x) \la x_2, x_2\ra = x_1 \la x_1, x\ra \la x_2 , x_2 \ra= x_1 \la x_2 \la x,x_1 \ra ,x_2\ra=0 \qquad \forall x\in X.$$
Hence, by assumption, $\theta_{x_1,x_1}=0$ or $\la x_2, x_2\ra=0$, that is, $x_1=0$ or $x_2=0$.

\smallskip

(vi)  $\Longrightarrow$ (i). Assume (vi) holds but $A$ is not prime. Then there are non-zero elements $a_1,a_2\in A$ such that $a_1Aa_2=\{0\}$. By assumption $X$ is essentially full, so by Lemma \ref{lem:essfull} there exist $x_1, x_2\in X$ such that $x_1a_1\ne 0$ and $x_2 a_2 \ne 0$. But then
$$x_1a_1 \la x,x_2a_2 \ra=x_1a_1 \la x,x_2\ra a_2=0 \qquad \forall x \in X,$$
which contradicts our assumption.
\end{proof}

\begin{remark}
In particular, Proposition \ref{prop:prime} shows (probably the well-known fact) that the primeness is an invariant property under  Morita equivalence (see e.g. \cite[Chapter~3]{RW}). Indeed, if $X$ is an $A-B$ imprimitivity bimodule, then by definition $X$ is full both as a left Hilbert $A$-module and as a right Hilbert $B$-module. Then $A\cong \KK(X)$ by \cite[Proposition~3.8]{RW}, so the equivalence of (i) and (ii) in Proposition \ref{prop:prime} says that $A$ is prime if and only if $B$ is prime.  For the other interesting properties that are invariant under Morita equivalence we refer to \cite{HRW}.
\end{remark}

The next simple example demonstrates the necessity of the assumption that $X$ if essentially full in both conditions (ii) and (vi) of Proposition \ref{prop:prime}.

\begin{example}
Let $A$ be any non-prime $C^*$-algebra that contains a prime non-zero ideal $I$ (e.g. $A=\C\oplus \C$ and $I=\C \oplus \{0\}$).  Consider $X=I$ as a Hilbert $A$-module in the usual way. Then $\KK(X)=I$ is a prime $C^*$-algebra, while $A$ is not.

Further, if $x_1,x_2\in X$ satisfy $0=x_1\la x,x_2\ra=x_1x^*x_2$ for all $x\in X$, the primeness of $I$ implies $x_1=0$ or $x_2=0$. Therefore, the second condition in (vi) is satisfied, but (i) does not hold.
\end{example}

\medskip

If $X$ is a Hilbert $A$-module, we can introduce the operator space structure on $X$ via the operator space structure of its linking algebra $\L(X)$ (or extended linking algebra $\M(X)$), after identifying $X$ as the $2-1$ corner in $\L(X)$ (or $\M(X)$), via the isometric isomorphism $X \cong \KK(A,X)$, $x \mapsto r_x$. That is, for all $n \in \N$ and $\begin{bmatrix} x_{ij}  \end{bmatrix}\in \MM_n(X)$ we define
$$\left\|\begin{bmatrix}
x_{ij}
\end{bmatrix}
\right\|_{\MM_n(X)}:=\left\|\begin{bmatrix}\begin{bmatrix} 0 & 0 \\
r_{x_{ij}} & 0\end{bmatrix}\end{bmatrix}\right\|_{\MM_n(\L(X))}=\left\|\begin{bmatrix}\begin{bmatrix} 0 & 0 \\
r_{x_{ij}} & 0\end{bmatrix}\end{bmatrix}\right\|_{\MM_n(\M(X))},$$
so that the canonical embedding
 $$\iota_X : X \hookrightarrow \M(X), \qquad \iota_X: x \mapsto
 \begin{bmatrix} 0 & 0 \\ r_x & 0
 \end{bmatrix}$$
becomes a complete isometry. This structure is called the \emph{canonical operator space structure} on $X$ (for details we refer to \cite[Section~8.2]{BLM}). Further, since
the canonical embeddings
$$\iota_{M(A)}: M(A) \hookrightarrow \M(X), \qquad \iota_{M(A)} : v \mapsto \begin{bmatrix}
T_v & 0 \\
0 & 0
\end{bmatrix}$$
and
$$\iota_{\BB(X)}: \BB(X) \hookrightarrow \M(X), \qquad \iota_{\BB(X)}: u \mapsto \begin{bmatrix}
0 & 0 \\
0 & u
\end{bmatrix}$$
are injective $*$-homomorphisms between $C^*$-algebras, they are also completely isometric.

We record the next simple fact:
\begin{lemma}\label{lem:ind}
Let $X$ be a Hilbert $A$-module. For each $\phi \in \CB(X)$ we define a map
$$\wt{\phi}: \M(X) \to \M(X) \qquad \mbox{by}  \qquad
\wt{\phi}\left(\begin{bmatrix}
T_v & l_y \\
r_x & u \\
\end{bmatrix}\right):=\begin{bmatrix}
0 & 0 \\
r_{\phi(x)} & 0 \\
\end{bmatrix}.
$$
Then $\wt{\phi}\in \CB(\M(X))$ and $\|\wt{\phi}\|_{cb}=\|\phi\|_{cb}$.
\end{lemma}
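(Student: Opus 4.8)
The plan is to factor $\wt\phi$ as a composition of three maps whose complete norms are under control: a completely contractive ``corner projection'' $\M(X)\to X$, the given map $\phi\colon X\to X$, and the canonical complete isometry $\iota_X\colon X\hookrightarrow\M(X)$. Concretely, set $p:=\iota_{M(A)}(1)=\begin{bmatrix}T_1&0\\0&0\end{bmatrix}$ and $q:=\iota_{\BB(X)}(\mathrm{id}_X)=\begin{bmatrix}0&0\\0&\mathrm{id}_X\end{bmatrix}$; these are orthogonal projections in the $C^*$-algebra $\M(X)$ with $p+q=1_{\M(X)}$. A direct matrix computation gives, for $S=\begin{bmatrix}T_v&l_y\\r_x&u\end{bmatrix}\in\M(X)$, that $qSp=\begin{bmatrix}0&0\\r_x&0\end{bmatrix}=\iota_X(x)$, so the map $\mu\colon\M(X)\to\M(X)$, $\mu(S):=qSp$, takes values precisely in $\iota_X(X)$. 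Since $\iota_X$ is by construction a complete isometry, $\iota_X\colon X\to\iota_X(X)$ is a completely isometric isomorphism, and we may put $\pi:=\iota_X^{-1}\circ\mu\colon\M(X)\to X$, explicitly $\pi\bigl(\begin{bmatrix}T_v&l_y\\r_x&u\end{bmatrix}\bigr)=x$. Checking on a general matrix, $\wt\phi=\iota_X\circ\phi\circ\pi$.

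For the bound $\|\wt\phi\|_{cb}\le\|\phi\|_{cb}$ it now suffices to see that $\pi$ is completely contractive. Indeed $\mu$ is two-sided multiplication by the contractions $q$ and $p$ in the $C^*$-algebra $\M(X)$, hence completely contractive: for $[S_{ij}]\in\MM_n(\M(X))$ one has $[qS_{ij}p]=\di(q)[S_{ij}]\di(p)$, whose norm is at most $\|q\|\,\|[S_{ij}]\|\,\|p\|\le\|[S_{ij}]\|$. Also $\iota_X^{-1}\colon\iota_X(X)\to X$ is completely contractive because $\iota_X$ is a complete isometry; composing, $\|\pi\|_{cb}\le1$. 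Therefore $\|\wt\phi\|_{cb}\le\|\iota_X\|_{cb}\,\|\phi\|_{cb}\,\|\pi\|_{cb}=\|\phi\|_{cb}$, using once more that $\iota_X$ is a complete isometry. In particular $\wt\phi\in\CB(\M(X))$.

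The reverse inequality falls out of the same factorization: since $\pi\circ\iota_X=\mathrm{id}_X$, we get $\wt\phi\circ\iota_X=\iota_X\circ\phi$, and as $\iota_X$ is a complete isometry this yields $\|\phi\|_{cb}=\|\iota_X\circ\phi\|_{cb}=\|\wt\phi\circ\iota_X\|_{cb}\le\|\wt\phi\|_{cb}\,\|\iota_X\|_{cb}=\|\wt\phi\|_{cb}$. Combining the two inequalities proves $\|\wt\phi\|_{cb}=\|\phi\|_{cb}$.

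There is no genuine obstacle here; the only points worth pausing on are that $p$ and $q$ really lie in $\M(X)$ (which uses $\BB(A)=M(A)$ being unital and $\mathrm{id}_X\in\BB(X)$) and that the range of the corner map $\mu$ is $\iota_X(X)$ carrying exactly the operator space structure used to define the canonical operator space structure of $X$ — but that identification is precisely how that structure was introduced above, so the argument is essentially bookkeeping around the factorization $\wt\phi=\iota_X\circ\phi\circ\pi$.
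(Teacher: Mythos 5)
Your proof is correct and follows essentially the same route as the paper: both rest on identifying $X$ with the $2$-$1$ corner of $\M(X)$ via the complete isometry $\iota_X$. The only difference is presentational --- you package the argument as the factorization $\wt\phi=\iota_X\circ\phi\circ\pi$ and explicitly verify that the compression $S\mapsto qSp$ is completely contractive, a point the paper's chain of matrix-norm equalities leaves implicit.
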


\begin{proof}
For all $n \in \N$, $\begin{bmatrix} v_{ij} \end{bmatrix}\in \MM_n(M(A))$, $\begin{bmatrix} x_{ij}  \end{bmatrix}, \begin{bmatrix} y_{ij}  \end{bmatrix}\in \MM_n(X)$ and $\begin{bmatrix} u_{ij} \end{bmatrix} \in \MM_n(\BB(X))$ we have
\begin{eqnarray*}
\left\|\wt{\phi}_n \left(\begin{bmatrix}\begin{bmatrix}T_{v_{ij}} & l_{y_{ij}} \\
r_{x_{ij}} & u_{ij}\end{bmatrix}\end{bmatrix}\right)\right\|_{\MM_n(\M(X))} &=&
\left\| (\iota_X)_n \left(\begin{bmatrix} \phi(x_{ij})\end{bmatrix}\right)\right\|
_{\MM_n(\M(X))} \\
&=&\left\|\begin{bmatrix} \phi(x_{ij})\end{bmatrix}\right\|_{\MM_n(X)}\\
&=&\left\|\phi_n\left(\begin{bmatrix} x_{ij}\end{bmatrix}\right)\right\|_{\MM_n(X)}.
\end{eqnarray*}
\end{proof}

\begin{remark}\label{rem:hbim}
By Remark~\ref{rem:multmod} any Hilbert $A$-module $X$ can be considered as a Hilbert $M(A)$-module and every $u \in \BB(X)$ is $M(A)$-linear. Now for all $u \in \BB(X)$, $x \in X$ and $v \in M(A)$ we have $u(xv)=(ux)v$, so in this way $X$ becomes a Banach $\BB(X)-M(A)$-bimodule (in particular, the product $uxv$ is unambiguously defined). Moreover, it is straightforward to check that each matrix space $\MM_n(X)$ ($n\in \N$) is a Banach $\MM_n(\BB(X))-\MM_n(M(A))$-bimodule in the canonical way. That is,
$$
\left\|\begin{bmatrix}u_{ij}\end{bmatrix} \begin{bmatrix}x_{ij}\end{bmatrix}\right\|_{\MM_n(X)} \leq \left\|\begin{bmatrix}u_{ij}\end{bmatrix}\right\|_{\MM_n(\BB(X))} \left\|\begin{bmatrix}x_{ij}\end{bmatrix}\right\|_{\MM_n(X)}
$$
and
$$
\left\|\begin{bmatrix}x_{ij}\end{bmatrix}\begin{bmatrix}v_{ij}\end{bmatrix}\right\|_{\MM_n(X)} \leq \left\|\begin{bmatrix}x_{ij}\end{bmatrix}\right\|_{\MM_n(X)}\left\|\begin{bmatrix}v_{ij}\end{bmatrix}\right\|_{\MM_n(M(A))}
$$
for all $n \in \N$, $\begin{bmatrix}u_{ij}\end{bmatrix} \in \MM_n(\BB(X))$, $\begin{bmatrix}v_{ij}\end{bmatrix} \in \MM_n(M(A))$ and $\begin{bmatrix}x_{ij}\end{bmatrix} \in \MM_n(X)$.
\end{remark}
\smallskip

Let us now introduce the class of elementary operators on Hilbert $C^*$-modules.

If $X$ is a Hilbert $A$-module, then first, following the $C^*$-algebraic case, for each $u \in \BB(X)$ and $v \in M(A)$ we define a map
$$M_{u,v} : X \to X \qquad \mbox{by} \qquad M_{u,v} : x \mapsto uxv.$$

\begin{definition}
By an \emph{elementary operator} on a Hilbert $A$-module $X$ we mean a map $\phi : X \to X$ for which there exists a finite number of elements $u_1, \ldots, u_k \in \BB(X)$ and $v_1, \ldots, v_k \in M(A)$ such that
\begin{equation}\label{eq:elop}
\phi=\sum_{i=1}^k M_{u_i,v_i}.
\end{equation}
\end{definition}

\begin{example}
If a $C^*$-algebra $A$ is considered as a Hilbert $A$-module in the standard way, then $\BB(A)$ and $M(A)$ coincide, so elementary operators on $A$, as a Hilbert $A$-module, agree with the usual notion of elementary operators on $A$.
\end{example}

Similarly as in the $C^*$-algebraic case, if $X$ is a Hilbert $A$-module, then using the operator space axioms, Remark \ref{rem:hbim} and the $C^*$-identity, it is easy to verify that elementary operators on $X$ are completely bounded and that their cb-norm is dominated by the Haagerup norm of their corresponding tensor in $\BB(X)\otimes M(A)$. That is, if an elementary operator $\phi:X \to X$ is represented as in (\ref{eq:elop}) then
$$
\left\|\phi\right\|_{cb}\leq \left\|\sum_{i=1}^k u_i \otimes v_i\right\|_h
$$
(see \cite[p.~207]{AM}). Therefore, the mapping
\[
(\BB(X) \otimes M(A), \|\cdot\|_h) \to (\CB(X), \|\cdot\|_{cb}) \quad \mbox{given by} \quad \sum_{i=1}^k u_i \otimes v_i \mapsto \sum_{i=1}^k M_{u_i,v_i},
\]
is a well-defined contraction, so we can continuously extend it to the map
$$\Theta_X:(\BB(X) \otimes_h M(A), \|\cdot\|_h) \to (\CB(X), \|\cdot\|_{cb}),$$
where $\BB(X)\otimes_h M(A)$ is the completion of $\BB(X)\otimes M(A)$ with respect to $\|\cdot\|_h$.

\begin{lemma}\label{lem:emb}
Using the same notation as in Lemma \ref{lem:ind}, for each $t \in \BB(X)\otimes_h M(A)$ we have
$$\wt{\Theta_X(t)}=\Theta_{\M(X)}((\iota_{\BB(X)} \otimes \iota_{M(A)})(t)).$$
\end{lemma}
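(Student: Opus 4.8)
The plan is to reduce the identity to elementary tensors by a continuity/density argument, and then to verify it by a direct matrix computation. First I would observe that both sides are bounded linear maps $\BB(X)\otimes_h M(A)\to\CB(\M(X))$. For the left-hand side this is clear: $t\mapsto\wt{\Theta_X(t)}$ is the composition of the canonical contraction $\Theta_X\colon\BB(X)\otimes_h M(A)\to\CB(X)$ with the isometry $\phi\mapsto\wt\phi$ of Lemma~\ref{lem:ind}. For the right-hand side, recall that $\iota_{\BB(X)}$ and $\iota_{M(A)}$ are completely isometric $*$-homomorphisms into the $C^*$-algebra $\M(X)$, hence into $M(\M(X))$; by functoriality of the Haagerup tensor product, $\iota_{\BB(X)}\otimes\iota_{M(A)}$ therefore extends to a complete contraction $\BB(X)\otimes_h M(A)\to M(\M(X))\otimes_h M(\M(X))$, and composing with the canonical contraction $\Theta_{\M(X)}$ gives again a contractive map into $\CB(\M(X))$. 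Since $\BB(X)\otimes M(A)$ is dense in $\BB(X)\otimes_h M(A)$, it suffices to prove the asserted equality for $t=u\otimes v$ with $u\in\BB(X)$, $v\in M(A)$, and by linearity this is exactly the elementary-tensor case.

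For such $t$ one has $\Theta_X(u\otimes v)=M_{u,v}$, so by the definition of $\phi\mapsto\wt\phi$ the left-hand side sends a general element $S=\begin{bmatrix}T_{v'} & l_y\\ r_x & u'\end{bmatrix}$ of $\M(X)$ to $\begin{bmatrix}0 & 0\\ r_{uxv} & 0\end{bmatrix}$. On the right, $(\iota_{\BB(X)}\otimes\iota_{M(A)})(u\otimes v)=\iota_{\BB(X)}(u)\otimes\iota_{M(A)}(v)$, and $\Theta_{\M(X)}$ of this is the two-sided multiplication $S\mapsto\iota_{\BB(X)}(u)\,S\,\iota_{M(A)}(v)$ on $\M(X)$. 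Multiplying out the $2\times 2$ matrices,
\[
\begin{bmatrix}0 & 0\\ 0 & u\end{bmatrix}\begin{bmatrix}T_{v'} & l_y\\ r_x & u'\end{bmatrix}\begin{bmatrix}T_v & 0\\ 0 & 0\end{bmatrix}=\begin{bmatrix}0 & 0\\ (u\,r_x)\,T_v & 0\end{bmatrix},
\]
so it remains only to identify $(u\,r_x)\,T_v$ with $r_{uxv}$. This follows from two elementary identities: $u\,r_x=r_{ux}$ because $u$ is $A$-linear (so $u(xb)=(ux)b$ for $b\in A$), and $r_{ux}\,T_v=r_{uxv}$ because $(ux)(vb)=(uxv)b$ for $b\in A$, where $uxv$ is read via the $\BB(X)$–$M(A)$-bimodule structure on $X$ of Remark~\ref{rem:hbim} (using $\BB(X)=\BB_{M(A)}(X)$). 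Hence both sides send each $S$ to $\begin{bmatrix}0 & 0\\ r_{uxv} & 0\end{bmatrix}$, which settles the elementary-tensor case and, together with the first paragraph, the lemma.

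I expect the real content to be the reduction in the first paragraph; the matrix computation is routine once conventions are fixed. The only point that demands care is the bookkeeping: keeping track of which corner of $\M(X)$ represents $X$ under $x\mapsto r_x$, and checking that the two a priori distinct readings of $uxv$, namely $(ux)v$ and $u(xv)$, agree — which is precisely what Remark~\ref{rem:hbim} guarantees. I do not anticipate any genuine conceptual obstruction.
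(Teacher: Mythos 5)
Your proof is correct and follows essentially the same route as the paper's: the heart in both cases is the identical $2\times 2$ matrix computation $\iota_{\BB(X)}(u)\,S\,\iota_{M(A)}(v)=\begin{bmatrix}0 & 0\\ r_{uxv} & 0\end{bmatrix}$, the only difference being that you pass from elementary tensors to general $t$ by linearity, density and boundedness of both sides, whereas the paper instead writes $t=\sum_{k=1}^\infty u_k\otimes v_k$ with $\sum_k u_ku_k^*$ and $\sum_k v_k^*v_k$ norm convergent and computes directly on the series. Both ways of handling the completion are valid.
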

\begin{proof}
By \cite[Proposition~1.5.6]{BLM} there exist sequences $(u_k)$ in $\BB(X)$ and $(v_k)$ in $M(A)$ such that the series $\sum_{k=1}^\infty u_k u_k^*$ and $\sum_{k=1}^\infty v_k^*v_k$ are norm convergent and $t=\sum_{k=1}^\infty u_k \otimes v_k$. Then the series $\sum_{k=1}^\infty u_k xv_k$ is norm convergent for every $x\in X$ and for all $v \in M(A)$, $x,y \in X$ and $u \in \BB(X)$ we have
\begin{eqnarray*}
\wt{\Theta_X(t)}\left(\begin{bmatrix}
T_v & l_y \\
r_x & u
\end{bmatrix}\right)&=&\begin{bmatrix}
0 & 0 \\
\sum_{k=1}^\infty r_{u_k x v_k} & 0
\end{bmatrix} = \sum_{k=1}^\infty \begin{bmatrix} 0 & 0 \\
0 & u_k \end{bmatrix}
\begin{bmatrix}
T_v & l_y \\
r_x & u \\
\end{bmatrix}\begin{bmatrix}
T_{v_k} & 0 \\
0 & 0
\end{bmatrix}\\
&=&\Theta_{\M(X)}((\iota_{\BB(X)} \otimes \iota_{M(A)})(t))\left(\begin{bmatrix}
T_v & l_y \\
r_x & u
\end{bmatrix}\right).
\end{eqnarray*}
\end{proof}

We are now ready to prove the main result of this paper, the generalization of Theorem \ref{thetaiso} in the context of Hilbert $C^*$-modules.

\begin{theorem}\label{thm:main}
Let $X$ be a non-zero Hilbert $A$-module. The following conditions are equivalent:
\begin{itemize}
\item[(i)] $\Theta_X$ is isometric.
\item[(ii)]  $\Theta_X$ is injective.
\item[(iii)] $A$ is a prime $C^*$-algebra.
\end{itemize}
\end{theorem}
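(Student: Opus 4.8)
The plan is to prove the cycle of implications $\text{(i)} \Rightarrow \text{(ii)} \Rightarrow \text{(iii)} \Rightarrow \text{(i)}$, using the reduction to the $C^*$-algebraic case (Theorem \ref{thetaiso}) via the extended linking algebra $\M(X)$. The implication $\text{(i)} \Rightarrow \text{(ii)}$ is immediate since an isometry is injective. For $\text{(ii)} \Rightarrow \text{(iii)}$, I would argue contrapositively: if $A$ is not prime, then by Theorem \ref{thetaiso} the canonical contraction $\Theta_{M(A)}$ fails to be injective already on $M(A)$ as a Hilbert module over itself, but I need a non-zero $t \in \BB(X) \otimes_h M(A)$ with $\Theta_X(t) = 0$. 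The natural candidate is to pick non-zero $a_1, a_2 \in A$ with $a_1 A a_2 = \{0\}$ and use Lemma \ref{lem:essfull}/Proposition \ref{prop:prime} to produce $x_0 \in X$ with, say, $x_0 a_1 \ne 0$; then set $t = \theta_{x_0 a_1,\, x_0 a_1} \otimes a_2 \in \BB(X) \otimes M(A)$ (or a similar rank-one-times-element tensor). One checks $t \ne 0$ in the Haagerup norm (the associated operator $x \mapsto \la x_0 a_1, x\ra$ is a non-zero map into $A$, so the elementary-operator-like computation inside the linking algebra is non-degenerate — more carefully, one shows $t$ corresponds to a non-zero element of $\KK(X)\otimes_h M(A)$ and then to a non-zero tensor whose image in $\L(X)\otimes_h\L(X)$ is non-zero by injectivity of $\Theta_{\L(X)}$... wait, $\L(X)$ is prime only if $A$ is prime, so that route is circular). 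Instead I would directly verify $\Theta_X(t) = 0$: for every $x \in X$, $\Theta_X(t)(x) = x_0 a_1 \la x_0 a_1, x\ra a_2 = x_0\, (a_1 \la x_0 a_1, x\ra a_2)$, and since $\la x_0 a_1, x\ra \in A$ and $a_1 A a_2 = \{0\}$, this vanishes; meanwhile $t \ne 0$ in $\BB(X) \otimes_h M(A)$ because $\theta_{x_0 a_1, x_0 a_1} \ne 0$ and $a_2 \ne 0$ and the Haagerup norm is a genuine cross norm (faithful on elementary tensors, cf. the injectivity of the Haagerup tensor product).

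The heart of the argument is $\text{(iii)} \Rightarrow \text{(i)}$, and here the strategy is to transport the problem into $\M(X)$ and invoke Mathieu's theorem there. By Proposition \ref{prop:prime}, if $A$ is prime then $\M(X)$ is a prime $C^*$-algebra, so by Theorem \ref{thetaiso} the canonical contraction $\Theta_{\M(X)} : \M(X) \otimes_h \M(X) \to \CB(\M(X))$ is isometric. Now fix $t \in \BB(X) \otimes_h M(A)$. On one hand, since $\iota_{\BB(X)}$ and $\iota_{M(A)}$ are complete isometries, the induced map $\iota_{\BB(X)} \otimes \iota_{M(A)} : \BB(X) \otimes_h M(A) \to \M(X) \otimes_h \M(X)$ is isometric (functoriality of the Haagerup tensor product with respect to complete isometries, cf. \cite{BLM, ER}), so $\|(\iota_{\BB(X)} \otimes \iota_{M(A)})(t)\|_h = \|t\|_h$. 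On the other hand, Lemma \ref{lem:emb} gives $\wt{\Theta_X(t)} = \Theta_{\M(X)}((\iota_{\BB(X)} \otimes \iota_{M(A)})(t))$, and Lemma \ref{lem:ind} gives $\|\wt{\Theta_X(t)}\|_{cb} = \|\Theta_X(t)\|_{cb}$. Chaining these:
\[
\|\Theta_X(t)\|_{cb} = \|\wt{\Theta_X(t)}\|_{cb} = \|\Theta_{\M(X)}((\iota_{\BB(X)} \otimes \iota_{M(A)})(t))\|_{cb} = \|(\iota_{\BB(X)} \otimes \iota_{M(A)})(t)\|_h = \|t\|_h,
\]
using the isometry of $\Theta_{\M(X)}$ in the third equality. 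Combined with the already-known contractivity $\|\Theta_X(t)\|_{cb} \le \|t\|_h$, this shows $\Theta_X$ is isometric.

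The main obstacle I anticipate is the clean verification that $\iota_{\BB(X)} \otimes \iota_{M(A)}$ is isometric from $\BB(X) \otimes_h M(A)$ into $\M(X) \otimes_h \M(X)$ — this requires that the Haagerup tensor product is functorial for complete isometries (equivalently, that it is "injective" in the appropriate sense), which is standard but must be cited precisely; and secondarily, making sure Lemma \ref{lem:emb} and Lemma \ref{lem:ind} compose correctly at the level of cb-norms rather than just norms. A subtler point worth double-checking is whether $\M(X)$ being prime genuinely requires $A$ prime (Proposition \ref{prop:prime}(iv)) and that no hypothesis on essential fullness of $X$ sneaks in — but Proposition \ref{prop:prime} already packages the equivalence $A$ prime $\iff \M(X)$ prime without any fullness assumption, so the reduction is legitimate. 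For the converse direction (ii)$\Rightarrow$(iii), the one place to be careful is confirming that the candidate tensor $t$ is nonzero in the Haagerup norm; I would either appeal to the fact that the Haagerup norm restricted to elementary tensors equals the product of norms, or embed into $\M(X) \otimes_h \M(X)$ and note $\Theta_{\M(X)}$ is merely contractive there (not injective, since $\M(X)$ need not be prime) — so the cleaner route is the direct cross-norm argument, avoiding any appeal to primeness of $\M(X)$ in the non-prime case.
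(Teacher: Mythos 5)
Your implication (iii) $\Rightarrow$ (i) is exactly the paper's argument: primeness of $\M(X)$ via Proposition \ref{prop:prime}, Mathieu's Theorem \ref{thetaiso} applied to $\M(X)$, injectivity (functoriality under complete isometries) of the Haagerup tensor product applied to $\iota_{\BB(X)}\otimes\iota_{M(A)}$, and the two norm identities of Lemmas \ref{lem:ind} and \ref{lem:emb}. That chain of equalities is correct, and your worry about whether essential fullness sneaks into the equivalence ``$A$ prime $\iff \M(X)$ prime'' is resolved exactly as you suspect: Proposition \ref{prop:prime} states that equivalence with no fullness hypothesis.

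There is, however, a genuine gap in your (ii) $\Rightarrow$ (iii). Having chosen non-zero $a_1,a_2\in A$ with $a_1Aa_2=\{0\}$, you invoke Lemma \ref{lem:essfull} to produce $x_0\in X$ with $x_0a_1\neq 0$. But that lemma yields such an $x_0$ only under the hypothesis that $X$ is \emph{essentially full}, and nothing in the non-prime setting guarantees this: for $A=\C\oplus\C$ and $X=\C\oplus\{0\}$ one has $Xa=\{0\}$ for $a=(0,1)\neq 0$, so with $a_1=(0,1)$, $a_2=(1,0)$ no admissible $x_0$ exists and your candidate $\theta_{x_0a_1,x_0a_1}$ is forced to be zero. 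The gap is easily repaired --- if $Xa_1=\{0\}$ then $t=\mathrm{id}_X\otimes a_1$ (or $u\otimes a_1$ for any non-zero $u\in\BB(X)$) is already a non-zero elementary tensor with $\Theta_X(t)(x)=u(x)a_1=0$ for all $x\in X$ --- but as written the step is unjustified. The paper avoids the case split by quoting the negation of condition (v) of Proposition \ref{prop:prime}, which directly supplies non-zero $u\in\KK(X)$ and $a\in A$ with $uxa=0$ for all $x$; the essential-fullness issue is absorbed into the proof of that proposition. The rest of your argument for this direction is sound: the direct computation $\Theta_X(t)(x)=x_0a_1\la x_0a_1,x\ra a_2=x_0\bigl(a_1\la x_0a_1,x\ra a_2\bigr)=0$ is correct since $\la x_0a_1,x\ra\in A$, and a non-zero elementary tensor is indeed non-zero in the Haagerup norm (cross-norm property), so no appeal to primeness of $\M(X)$ or $\L(X)$ is needed there.
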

\begin{proof}
(i) $\Longrightarrow$ (ii). This is trivial.

\smallskip

(ii)$\Longrightarrow$ (iii). Assume that $A$ is not prime. Then by Proposition  \ref{prop:prime} there are non-zero elements $u \in \KK(X)$ and $a \in A$ such that $uxa=0$ for all $x \in X$. Then $u \otimes a$ is a non-zero tensor in $\KK(X)\otimes A \subseteq \BB(X)\otimes M(A)$ but
$$\Theta_X(u \otimes a)(x)=uxa=0$$
for all $x \in X$.

\smallskip

(iii) $\Longrightarrow$ (i). 
Since the canonical embeddings $\iota_{\BB(X)}: \BB(X) \hookrightarrow \M(X)$ and $\iota_{M(A)}: M(A) \hookrightarrow \M(X)$ are completely isometric, the injectivity of the Haagerup tensor product implies
$$\|(\iota_{\BB(X)} \otimes \iota_{M(A)})(t)\|_h=\|t\|_h \qquad \forall t \in \BB(X)\otimes_h M(A)$$
(see e.g. \cite[Section~1.5.5]{BLM}). If $A$ is a prime $C^*$-algebra, then by Proposition \ref{prop:prime} $\M(X)$ is also prime, so Theorem \ref{thetaiso} implies
$$\|\Theta_{\M(X)}(t')\|_{cb}=\|t'\|_h \qquad \forall t' \in \M(X) \otimes_h \M(X).$$
Then using Lemmas \ref{lem:ind} and \ref{lem:emb} we see that for all $t \in \BB(X) \otimes_h M(A)$ we have
\begin{eqnarray*}
\|\Theta_{X}(t)\|_{cb}&=&\|\widetilde{\Theta_{X}(t)}\|_{cb}=\|\Theta_{\M(X)}((\iota_{\BB(X)} \otimes \iota_{M(A)})(t))\|_{cb}\\
&=&\|(\iota_{\BB(X)} \otimes \iota_{M(A)})(t)\|_{h} =\|t\|_h.
\end{eqnarray*}
Thus, $\Theta_X$ is isometric.
\end{proof}


\section*{Acknowledgements}
We thank Professor Michael Frank for reading the manuscript and for his comments. We also thank the anonymous referee for useful suggestions that helped us to improve the presentation of our results.

\end{document}